\title[Supplement to classification of three-fold divisorial contractions]{Supplement to classification of\\ three-fold divisorial contractions}
\author{Masayuki Kawakita}
\address{Research Institute for Mathematical Sciences, Kyoto University, Kyoto 606-8502, Japan}
\email{masayuki@kurims.kyoto-u.ac.jp}
\theoremstyle{plain}
\newtheorem{theorem}{Theorem}
\newtheorem{lemma}[theorem]{Lemma}
\newtheorem{corollary}[theorem]{Corollary}
\theoremstyle{remark}
\newtheorem{remark}[theorem]{Remark}
\newtheorem*{acknowledgements}{Acknowledgements}
\newcommand{\bC}{\mathbb{C}}
\newcommand{\bP}{\mathbb{P}}
\newcommand{\bZ}{\mathbb{Z}}
\newcommand{\cO}{\mathcal{O}}
\DeclareMathOperator{\ord}{ord}
\DeclareMathOperator{\wt}{wt}
\begin{document}
\begin{abstract}
Every three-fold divisorial contraction to a non-Gorenstein point is a weighted blow-up.
\end{abstract}

\maketitle

This supplement finishes the explicit description of a three-fold divisorial contraction whose exceptional divisor contracts to a non-Gorenstein point. Contractions to a quotient singularity were treated by Kawamata in \cite{Km96}. The author's study \cite{K05}, based on the singular Riemann--Roch formula, provided the classification except for the case of small discrepancy. On the other hand, Hayakawa classified those with discrepancy at most one in \cite{H99}, \cite{H00}, \cite{H05}, by the fact that there exist only a finite number of divisors with such discrepancy over a fixed singularity. The only case left was when it is a contraction to a c$D/2$ point with discrepancy two. We demonstrate its classification in Theorem \ref{thm:cD/2} by the method in \cite{K05}. It turns out that every contraction is a weighted blow-up.

\begin{theorem}\label{thm:nonGor}
Let $f\colon Y\to X$ be a three-fold divisorial contraction whose exceptional divisor $E$ contracts to a non-Gorenstein point $P$. Then $f$ is a weighted blow-up of the singularity $P\in X$ embedded into a cyclic quotient of a smooth five-fold.
\end{theorem}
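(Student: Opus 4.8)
The plan is to derive the theorem by going through the analytic types of the non-Gorenstein terminal point $P\in X$ and invoking, for each, the explicit classification already available or established in Theorem~\ref{thm:cD/2}. Since $f$ is a divisorial contraction, $X$ is terminal and $P$ is a terminal singularity; being non-Gorenstein, its index $r$ is at least two. By the classification of three-dimensional terminal singularities of index greater than one, $P$ is either a cyclic quotient singularity $\frac1r(1,a,r-a)$ with $\gcd(a,r)=1$, or a quotient of a compound Du Val singularity, that is, of type $cA/r$, $cAx/2$, $cAx/4$, $cD/2$, $cD/3$ or $cE/2$. I would treat these two families in turn and, in each case, extract from the relevant classification that $f$ is the restriction to $Y$ of a weighted blow-up of an ambient cyclic quotient of a smooth five-fold into which $X$ is embedded.

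If $P$ is a cyclic quotient singularity, Kawamata's theorem \cite{Km96} identifies $f$ with the weighted blow-up of the quotient singularity $\frac1r(1,a,r-a)$ with weights $\frac1r(1,a,r-a)$, which, once $\bC^3/\bZ_r$ is regarded as a coordinate subvariety of $\bC^5/\bZ_r$, is of the asserted form. If $P$ is a compound Du Val quotient, I would stratify by the discrepancy of $E$. Hayakawa's papers \cite{H99}, \cite{H00}, \cite{H05} classify all such $f$ with discrepancy at most one: over a fixed terminal singularity there are only finitely many exceptional divisors of discrepancy at most one, so an exhaustive analysis is possible, and each resulting $f$ is a weighted blow-up. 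The remaining discrepancies are treated in \cite{K05} via the singular Riemann--Roch formula, with the sole exception of a contraction to a $cD/2$ point of discrepancy two. That last case is precisely Theorem~\ref{thm:cD/2}, where $f$ is again exhibited as a weighted blow-up. In every one of these normal forms $X$ is realised as a subvariety of codimension at most two in a cyclic quotient of a smooth five-fold, after adjoining an auxiliary coordinate when necessary (both to reach dimension five and to make the defining equations quasi-homogeneous for the weights in play), and $f$ is the induced weighted blow-up. Assembling the cases proves the theorem.

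The only genuinely new ingredient, and the point at which essentially all the difficulty is concentrated, is Theorem~\ref{thm:cD/2}: for a $cD/2$ point of discrepancy two the finiteness of exceptional divisors that underlies Hayakawa's method is unavailable, while the discrepancy is small enough that the Riemann--Roch estimates of \cite{K05} are at their tightest, so one must push the method of \cite{K05} into its borderline regime and show that the numerical constraints on the normal projective surface $E$ leave only an explicit list of weighted blow-ups. Past that theorem, the work here is organisational: checking that the list of types above is exhaustive, and that the normal forms coming from \cite{Km96}, \cite{H99}, \cite{H00}, \cite{H05}, \cite{K05} and Theorem~\ref{thm:cD/2} can all be recast uniformly as weighted blow-ups of a cyclic quotient of a smooth five-fold.
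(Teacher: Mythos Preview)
Your proposal is correct and follows exactly the same route as the paper: Theorem~\ref{thm:nonGor} is obtained by assembling the classifications in \cite{Km96}, \cite{H99}, \cite{H00}, \cite{H05}, \cite{K05} together with Theorem~\ref{thm:cD/2} for the one remaining case. The paper in fact states this in a single sentence, so your case-by-case elaboration is simply a more expanded version of the same argument.
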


Our method of the classification is to study the structure of the bi-graded ring $\bigoplus_{i,j}f_*\cO_Y(iK_Y+jE)/f_*\cO_Y(iK_Y+jE-E)$. We find local coordinates at $P$ to meet this structure and verify that $f$ should be a certain weighted blow-up. The choice of local coordinates is restricted by the action of the cyclic group, which makes easier the classification in the non-Gorenstein case. We do not know if this method is sufficient to settle all the remaining Gorenstein cases in \cite{K01}, \cite{K02}, \cite{K03} with discrepancy at most four.

\medskip
By a three-fold divisorial contraction to a point, we mean a projective morphism $f\colon(Y\supset E)\to(X\ni P)$ between terminal three-folds such that $-K_Y$ is $f$-ample and the exceptional locus $E$ is a prime divisor contracting to a point $P$. We shall treat $f$ on the germ at $P$ in the complex analytic category. According to \cite[Theorems 1.2, 1.3]{K05}, the only case left is
\begin{center}
type e1 with $P=\textrm{c}D/2$, the discrepancy $a/n=4/2$
\end{center}
in \cite[Table 3]{K05}. We shall prove the following theorem.

\begin{theorem}\label{thm:cD/2}
Suppose that $f$ is a divisorial contraction of type \textup{e1} to a c$D/2$ point with discrepancy $2$. Then $f$ is the weighted blow-up with $\wt(x_1,x_2,x_3,x_4,x_5)=(\frac{r+1}{2},\frac{r-1}{2},2,1,r)$ with $r\ge7$, $r\equiv\pm1$ mod $8$ for a suitable identification
\begin{align*}
P\in X\ \simeq\ o\in\bigg(
\begin{array}{c}
x_1^2+x_4x_5+p(x_2,x_3,x_4)=0\\
x_2^2+q(x_1,x_3,x_4)+x_5=0
\end{array}
\bigg)
\subset\bC^5_{x_1x_2x_3x_4x_5}/\frac{1}{2}(1,1,1,0,0),
\end{align*}
such that $p$ is of weighted order $>r$ and $q$ is weighted homogeneous of weight $r-1$ for the weights distributed above.
\end{theorem}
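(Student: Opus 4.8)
The plan is to reconstruct the germ $P\in X$ together with $f$ from the bigraded ring
\[
  R=\bigoplus_{i,j}f_*\cO_Y(iK_Y+jE),\qquad
  \operatorname{gr}R=\bigoplus_{i,j}\frac{f_*\cO_Y(iK_Y+jE)}{f_*\cO_Y(iK_Y+jE-E)},
\]
following the method of \cite{K05}. The first step is to realise each graded piece of $\operatorname{gr}R$ as a space of sections on the exceptional surface. From $0\to\cO_Y(D-E)\to\cO_Y(D)\to\cO_E(D)\to0$ with $D=iK_Y+jE$ and $\cO_E(D)=\cO_Y(D)|_E$, together with relative Kawamata--Viehweg vanishing $R^1f_*\cO_Y(D-E)=0$—valid in the range of $(i,j)$ governing generators and relations, since $-K_Y$ and $-E$ are $f$-ample—one obtains $(\operatorname{gr}R)_{i,j}\cong H^0(E,\cO_E(iK_Y+jE))$, where $E=f^{-1}(P)$ is a normal projective surface with cyclic quotient singularities. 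At this stage I would import from \cite{K05} the data attached to this case (type \textup{e1}, a c$D/2$ point of discrepancy $2$, in \cite[Table 3]{K05}): the valuation $v=v_E$ with $a(E)=2$, the terminal (hence cyclic-quotient or $cA/r$-type) singularities of $Y$ lying on $E$, and the index-one cover $\pi\colon\bar X\to X$, which turns $\operatorname{gr}R$ over $X$ and its analogue over $\bar X$ into $\bZ/2$-graded rings; this $\bZ/2$-grading is what will ultimately restrict the choice of coordinates.

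The core step, which I expect to be the main obstacle, is to determine the Hilbert series of $\operatorname{gr}R$—hence its generators and defining relations—by the singular Riemann--Roch formula. Applied to the surface $E$ with its quotient singularities (equivalently, by comparing Euler characteristics on $Y$), Riemann--Roch expresses $\chi(E,\cO_E(iK_Y+jE))$ through $(K_Y\cdot E^2)$, $(E^3)$ and the local contributions of the singular points of $Y$ on $E$; feeding in the vanishing of higher cohomology in the relevant range, together with discrepancy $2$ and terminality of $Y$, should cut the numerical possibilities down to one and produce exactly the asserted structure—five generators $x_1,\dots,x_5$ of $v$-values $\tfrac{r+1}2,\tfrac{r-1}2,2,1,r$ and two relations of $v$-values $r+1$ and $r-1$, whose leading forms are $x_1^2+x_4x_5$ and $x_2^2+q$ after coordinate change (so that, eliminating $x_5$, the singularity appears as $x_1^2-x_2^2x_4+\cdots=0$ in $\bC^4/\tfrac12(1,1,1,0)$, consistent with $P$ being a c$D/2$ point). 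The arithmetic conditions should drop out of the same computation: $r\ge7$ is the bound separating this family from the quotient and Gorenstein cases already settled (reflecting, for instance, that a nonzero $\bZ/2$-invariant weighted-homogeneous $q(x_1,x_3,x_4)$ of weight $r-1$ of the required nondegeneracy needs the monomial $x_1x_3x_4^{(r-7)/2}$), while $r\equiv\pm1\bmod8$ expresses terminality of $Y$ at its non-Gorenstein point of index $r$ on $E$ given the $\bZ/2$-action. The genuinely delicate point here is rigidity: showing that the Riemann--Roch data does not merely allow but forces this single configuration, ruling out every competing distribution of singularities on $E$.

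Finally I would lift back to $P\in X$. Choosing generators of $\operatorname{gr}R$ of $v$-values $\tfrac{r+1}2,\tfrac{r-1}2,2,1,r$ and lifting them to $\bZ/2$-semi-invariant functions $x_1,\dots,x_5\in\cO_{\bar X,\bar P}$, I would verify that they define a $\bZ/2$-equivariant closed embedding $\bar P\in\bar X\hookrightarrow\bC^5$—hence $P\in X\hookrightarrow\bC^5/\tfrac12(1,1,1,0,0)$ with $\wt(x_1,\dots,x_5)=(\tfrac{r+1}2,\tfrac{r-1}2,2,1,r)$—and that the two relations lifting those of $\operatorname{gr}R$ can be normalised, by further coordinate changes respecting the $\bZ/2$-action and the $v$-filtration, to $x_1^2+x_4x_5+p(x_2,x_3,x_4)=0$ and $x_2^2+q(x_1,x_3,x_4)+x_5=0$ with $\ord p>r$ and $q$ weighted homogeneous of weight $r-1$. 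Since $v=v_E$ then coincides with the monomial valuation of these weights and $f$ is the divisorial contraction it defines, $f$ is the stated weighted blow-up. The remaining work in this step—controlling the lift of the relations faithfully enough that nothing of $v$-value $\le r$ beyond the displayed terms survives in the first equation and $q$ is exactly of weight $r-1$—is of the same nature as in the Gorenstein cases \cite{K01,K02,K03}, carried out step by step along the $v$-filtration, but the $\bZ/2$-action sharply limits the admissible monomials and makes it tractable.
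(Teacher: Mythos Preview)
Your framework matches the paper's---compute the graded pieces via singular Riemann--Roch (the paper's Lemma~\ref{lem:dim}, using \cite[(2.6),(2.8)]{K05}), locate generators and the first syzygy, then normalise---but the central step is organised differently and your version has a gap. The paper does \emph{not} lift abstract generators of $\operatorname{gr}R$ and then argue that the lifts embed $\bar X$ into $\bC^5$; it starts from an embedding already in hand, namely Mori's two normal forms \cite[Remark~23.1]{M85} for a c$D/2$ point, $\phi=x_1^2+x_2x_3x_4+x_2^{2\alpha}+x_3^{2\beta}+x_4^\gamma$ or $\phi=x_1^2+x_2^2x_4+\lambda x_2x_3^{2\alpha-1}+g(x_3^2,x_4)$ in $\bC^4/\tfrac12(1,1,1,0)$. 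The dimension count then pins down $\ord_E$ on these \emph{given} coordinates, one adjusts them to $(\hat x_1,\bar x_2,x_3,x_4)$ along the filtration (Lemma~\ref{lem:basis}), and Corollary~\ref{cor:basis} applied to the explicit $\phi$ eliminates form~(A) outright (Lemma~\ref{lem:A}) and forces form~(B) into the displayed shape with $x_5:=-\psi$ (Lemma~\ref{lem:B}). Your plan to manufacture the embedding from $\operatorname{gr}R$ directly, and then normalise two relations from scratch, is a harder problem for which you offer no mechanism beyond analogy with \cite{K01,K02,K03}; the paper sidesteps it entirely by anchoring to Mori's list.

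Two smaller points also need correction. The constraints $r\ge7$ and $r\equiv\pm1\pmod 8$ are \emph{inputs} taken from \cite[Table~3]{K05} (the unique non-Gorenstein point of $Y$ is $\tfrac{1}{2r}(1,-1,r+4)$ with those restrictions already imposed there), not consequences of the Hilbert series or of terminality as you propose; in particular the monomial in $q$ that is actually forced is $x_3^{(r-1)/2}$ or $x_1x_3^{(r-3)/4}$ depending on $r\bmod 8$ (see the paper's Remark), not $x_1x_3x_4^{(r-7)/2}$. And your closing sentence---$v_E$ equals the monomial valuation, hence $f$ is the weighted blow-up---skips a genuine step: the paper must still invoke \cite[Lemma~6.1]{K05} after verifying by hand that the weighted blow-up $g$ has terminal singularities and is smooth outside the strict transform of $x_1x_2x_3x_4x_5=0$.
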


The proof is along the argument in \cite[Section 7]{K05}. Henceforth $f\colon(Y\supset E)\to(X\ni P)$ is a divisorial contraction of type e1 to a c$D/2$ point with discrepancy $2$. By \cite[Table 3]{K05}, $Y$ has only one singular point $Q$ say at which $E$ is not Cartier. $Q$ is a quotient singularity of type $\frac{1}{2r}(1,-1,r+4)$ with $r\ge7$, $r\equiv\pm1$ mod $8$.

We set vector spaces $V_i=V_i^0\oplus V_i^1$ with
\begin{align*}
V_i^0&:=f_*\cO_Y(-iE)/f_*\cO_Y(-(i+1)E),\\
V_i^1&:=f_*\cO_Y(K_Y-(i+2)E)/f_*\cO_Y(K_Y-(i+3)E).
\end{align*}
They are zero for negative $i$, and we have the (bi-)graded ring $\bigoplus V_i$ by a local isomorphism $\cO_X(2K_X)\simeq\cO_X$. To study its structure in lower-degree part, we first compute the dimensions of $V_i^j$ in terms of the finite sets
\begin{align*}
N_i:=\Bigl\{(l_1,l_2,l_3,l_4,l_5)\in\bZ_{\ge0}^5\Bigm|\frac{r+1}{2}l_1+\frac{r-1}{2}l_2+2l_3+l_4+rl_5=i,\ \ l_1,l_2\le1\Bigr\}.
\end{align*}
$N_i$ is decomposed into $N_i^0\sqcup N_i^1$ with $N_i^j:=\{(l_1,l_2,l_3,l_4,l_5)\in N_i\mid l_1+l_2+l_3\equiv j\ \textrm{mod}\ 2\}$.

\begin{lemma}\label{lem:dim}
$\dim V_i^j=\#N_i^j$.
\end{lemma}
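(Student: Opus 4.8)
The plan is to compute $\dim V_i^j$ by Riemann--Roch in the manner of \cite[Section 7]{K05}, and then to recognise the outcome as $\#N_i^j$. The first step is to express the dimensions as Euler characteristics. Because $f$ is an extremal divisorial contraction with exceptional divisor $E$, both $-K_Y$ and $-E$ are $f$-ample, so for $i\ge0$ the divisors $-iE-K_Y$ and $-(i+2)E-K_Y$ are $f$-ample; relative Kawamata--Viehweg vanishing then yields $R^kf_*\cO_Y(-iE)=0$ and $R^kf_*\cO_Y(K_Y-(i+2)E)=0$ for all $k>0$ and $i\ge0$. Applying $f_*$ to
\begin{align*}
0\to\cO_Y(-(i+1)E)\to\cO_Y(-iE)\to\cO_Y(-iE)/\cO_Y(-(i+1)E)\to0
\end{align*}
and to the analogous sequence with $\cO_Y(-iE)$, $\cO_Y(-(i+1)E)$ replaced by $\cO_Y(K_Y-(i+2)E)$, $\cO_Y(K_Y-(i+3)E)$, I find that $f_*$ is exact on both, so $V_i^0$ and $V_i^1$ are the spaces of global sections of the respective quotient sheaves. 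Since those quotients are supported on the proper divisor $E$, their higher cohomology also vanishes, whence
\begin{align*}
\dim V_i^0&=\chi\bigl(\cO_Y(-iE)\bigr)-\chi\bigl(\cO_Y(-(i+1)E)\bigr),\\
\dim V_i^1&=\chi\bigl(\cO_Y(K_Y-(i+2)E)\bigr)-\chi\bigl(\cO_Y(K_Y-(i+3)E)\bigr),
\end{align*}
where the Euler characteristics are read off Reid's singular Riemann--Roch formula; only intersection numbers of divisors supported near $E$ enter each difference and the ambiguous term $\chi(\cO_Y)$ cancels, so the right-hand sides are unambiguous.

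The second step is to evaluate these differences. Reid's formula splits each $\dim V_i^j$ into a polynomial part in $i$, governed by intersection numbers such as $(-E|_E)^2$, $(-E)\cdot(-K_Y)\cdot E$, $(-K_Y)^2\cdot(-E)$ and the relevant second Chern contributions, and a periodic part coming from the quotient singularity $Q\in Y$ of type $\frac{1}{2r}(1,-1,r+4)$ through its Dedekind-sum term. All of these invariants are determined by the data ``type \textup{e1}, $P=\textrm{c}D/2$, discrepancy $2$'' recorded in \cite[Table 3]{K05}, equivalently by the numerical weighted blow-up there with $\wt(x_1,\dots,x_5)=(\frac{r+1}{2},\frac{r-1}{2},2,1,r)$. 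Moreover I would carry the computation out $\bZ/2$-equivariantly, keeping track of the eigenvalue for the $\bZ/2$-action under which $\omega_X$ is $2$-torsion --- this is precisely the grading separating $V_i^0$, built from $\cO_X$, from $V_i^1$, built from $\cO_X(K_X)$. The outcome is an explicit expression for each of $\dim V_i^0$ and $\dim V_i^1$ as a quasi-polynomial in $i$ of period $2r$.

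It then remains to match these quasi-polynomials with $\#N_i^j$. The full generating series is
\begin{align*}
\sum_i\#N_i\,t^i=\frac{(1+t^{(r+1)/2})(1+t^{(r-1)/2})}{(1-t^2)(1-t)(1-t^r)},
\end{align*}
the Hilbert series of a graded complete intersection of two forms in the weighted polynomial ring with the weights above; its $\bZ/2$-refinement sorts a monomial $x_1^{l_1}\cdots x_5^{l_5}$ by the parity of $l_1+l_2+l_3$, which is exactly the defining condition of $N_i^j$ once one observes that on $\bC^5/\frac{1}{2}(1,1,1,0,0)$ the coordinates $x_1,\dots,x_5$ carry $\bZ/2$-weights $(1,1,1,0,0)$. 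Both $\#N_i^j$ as a function of $i$ and the quasi-polynomial from the preceding step have period $2r$, so to conclude $\dim V_i^j=\#N_i^j$ it suffices to compare their finitely many coefficients, which is a direct check against \cite[Table 3]{K05}.

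The step I expect to be the main obstacle is the equivariant Riemann--Roch bookkeeping at $Q$: one must extract the $\bZ/2r$-orbifold Dedekind-sum contribution of $\frac{1}{2r}(1,-1,r+4)$, decompose it under the subgroup $\bZ/2\subset\bZ/2r$, and verify that after reassembly the $\bZ/2$-even and $\bZ/2$-odd parts of the Euler characteristic separately give $\#N_i^0$ and $\#N_i^1$ --- not merely that their sum equals $\#N_i$, since it is the refined equality that later pins down the coordinates. Some care is also needed with the small-$i$ edge effects of the quasi-polynomials and with the normalisation relating $\ord_E$ to the weights $(\frac{r+1}{2},\frac{r-1}{2},2,1,r)$; all of this proceeds along the lines of \cite[Section 7]{K05}.
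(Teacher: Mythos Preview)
Your outline is sound and would reach the conclusion, but the paper's route is considerably shorter and avoids the bookkeeping you flag as the main obstacle. Rather than computing the full period-$2r$ quasi-polynomial for $\dim V_i^j$ and matching it coefficient-by-coefficient against the Hilbert series, the paper exploits a recursion on both sides simultaneously. On the combinatorial side, the shift $l_3\mapsto l_3+1$ gives an injection $N_{i-2}^{1-j}\hookrightarrow N_i^j$ whose complement consists of the tuples with $l_3=0$, so $\#N_i^j-\#N_{i-2}^{1-j}$ is an elementary count depending only on residues of $i$. On the analytic side, the paper simply quotes \cite[(2.6), (2.8)]{K05}: with the basket data $(r_Q,b_Q,v_Q)=(2r,r+4,2)$ and $E^3=1/r$ read off \cite[Tables 2, 3]{K05}, one has $\dim V_i^j=d(j,-i-2j)$ and hence
\[
\dim V_i^j-\dim V_{i-2}^{1-j}=\frac{2i+1}{r}+B_{2r}(2i+rj+2)-B_{2r}(2i+rj).
\]
Matching the two increments is a single closed-form check, and induction on $i$ finishes. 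In particular there is no need for your $\bZ/2$-equivariant Riemann--Roch layer: the separation into $j=0$ and $j=1$ is already built into the two-variable quantity $d(i,j)$ of \cite{K05}, because $V_i^1$ is defined directly via $\cO_Y(K_Y-(i+2)E)$ rather than through an eigenspace decomposition downstairs. Your generating-function approach would work, but trades one clean identity for roughly $4r$ separate coefficient comparisons.
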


\begin{proof}
We follow the notation in \cite{K05}. $(r_Q,b_Q,v_Q)=(2r,r+4,2)$ and $E^3=1/r$ by \cite[Tables 2, 3]{K05}. By $\dim V_i^j=d(j,-i-2j)$ for $i\ge-2$ in \cite[(2.8)]{K05}, the equality \cite[(2.6)]{K05} for $(j,-i-2j)$ implies that for $i\ge0$,
\begin{align*}
\dim V_i^j-\dim V_{i-2}^{1-j}=\frac{2i+1}{r}+B_{2r}(2i+rj+2)-B_{2r}(2i+rj).
\end{align*}
On the other hand, by $N_i^j=\bigl(N_{i-2}^{1-j}+(0,0,1,0,0)\bigr)\sqcup\{(l_1,l_2,0,l_4,l_5)\in N_i^j\}$,
\begin{align*}
\#N_i^j-\#N_{i-2}^{1-j}&=
\begin{cases}
\#\{(0,0,0,l_4,l_5)\in N_i^0\}+\#\{(1,1,0,l_4,l_5)\in N_i^0\}&\textrm{for $j=0$},\\
\#\{(0,1,0,l_4,l_5)\in N_i^1\}+\#\{(1,0,0,l_4,l_5)\in N_i^1\}&\textrm{for $j=1$}.
\end{cases}
\end{align*}
The lemma follows by verifying the coincidence of their right-hand sides.
\end{proof}

We shall find bases of $V_i$ starting with an arbitrary identification
\begin{align}\label{eqn:cD/2}
P\in X\ \simeq\ o\in(\phi=0)\subset\bC^4_{x_1x_2x_3x_4}/\frac{1}{2}(1,1,1,0).
\end{align}
For a semi-invariant function $h$, $\ord_Eh$ denotes the order of $h$ along $E$.

\begin{lemma}\label{lem:basis}
\begin{enumerate}
\item\label{itm:basis:1}
$\ord_E x_4=1$ and $\ord_Ex_i\ge2$ for $i=1,2,3$. There exists some $k$ with $\ord_Ex_k=2$. We set $x_k=x_3$ by permutation.
\item\label{itm:basis:2}
For $i<\frac{r-1}{2}$, the monomials $x_3^{l_3}x_4^{l_4}$ for $(0,0,l_3,l_4,0)\in N_i$ form a basis of $V_i$. In particular for $k=1,2$, $\ord_E\bar{x}_k\ge\frac{r-1}{2}$ for $\bar{x}_k:=x_k+\sum c_{kl_3l_4}x_3^{l_3}x_4^{l_4}$ with some $c_{kl_3l_4}\in \bC$, with summation over $(0,0,l_3,l_4,0)\in\bigcup_{i<\frac{r-1}{2}}N_i^1$.
\item\label{itm:basis:3}
There exists some $k$ with $\ord_E \bar{x}_k=\frac{r-1}{2}$ such that the monomials $\bar{x}_k$ and $x_3^{l_3}x_4^{l_4}$ for $(0,0,l_3,l_4,0)\in N_{\frac{r-1}{2}}$ form a basis of $V_{\frac{r-1}{2}}$. We set $\bar{x}_k=\bar{x}_2$ by permutation, then $\ord_E\hat{x}_1\ge\frac{r+1}{2}$ for $\hat{x}_1:=\bar{x}_1+\sum c_{l_2l_3l_4}\bar{x}_2^{l_2}x_3^{l_3}x_4^{l_4}$ with some $c_{l_2l_3l_4}\in \bC$, with summation over $(0,l_2,l_3,l_4,0)\in N_{\frac{r-1}{2}}^1$.
\item\label{itm:basis:4}
$\ord_E\hat{x}_1=\frac{r+1}{2}$. For $i<r-1$, the monomials $\hat{x}_1^{l_1}\bar{x}_2^{l_2}x_3^{l_3}x_4^{l_4}$ for $(l_1,l_2,l_3,l_4,0)\in N_i$ form a basis of $V_i$.
\item\label{itm:basis:5}
The monomials $\hat{x}_1^{l_1}\bar{x}_2^{l_2}x_3^{l_3}x_4^{l_4}$ for $(l_1,l_2,l_3,l_4,0)\in N_{r-1}^0$ have one non-trivial relation, say $\psi$, in $V_{r-1}^0$. The natural exact sequence below is exact.
\begin{align*}
0\to\bC\psi\to\bigoplus_{(l_1,l_2,l_3,l_4,0)\in N_{r-1}}\bC\hat{x}_1^{l_1}\bar{x}_2^{l_2}x_3^{l_3}x_4^{l_4}\to V_{r-1}\to0.
\end{align*}
\item\label{itm:basis:6}
$\ord_E\psi=r$. The natural exact sequence below is exact.
\begin{align*}
0\to\bC x_4\psi\to\bigoplus_{(l_1,l_2,l_3,l_4,l_5)\in N_r}\bC\hat{x}_1^{l_1}\bar{x}_2^{l_2}x_3^{l_3}x_4^{l_4}\psi^{l_5}\to V_r\to0.
\end{align*}
\end{enumerate}
\end{lemma}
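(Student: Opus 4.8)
The plan is to build the basis of $\bigoplus V_i$ by induction on $i$, at each stage using Lemma \ref{lem:dim} to count dimensions and the multiplicativity of the graded ring to identify generators, and then to read off the constraints on the coordinates $x_i$ from the geometry of $Q$. The overall strategy mirrors \cite[Section 7]{K05}, but the quotient action simplifies the bookkeeping.

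First I would settle (\ref{itm:basis:1}). Since $f$ is a divisorial contraction with $a(E,X)=2$ and $X$ has a $\mathrm{c}D/2$ singularity, $E$ contracts to $P$, so $\ord_E\mathfrak{m}_P\ge1$; the quotient coordinate $x_4$ (the one with weight $0$ in $\frac12(1,1,1,0)$) must have $\ord_E x_4=1$ because otherwise $\ord_E$ of every invariant monomial of odd weight would be $\ge2$, contradicting $\dim V_1^0=\#N_1^0$ which one computes to be $1$. For $i=1,2,3$ the only $x_i$-linear contributions are $x_4$-monomials in even/odd degrees, so comparing with $\#N_i^j$ forces $\ord_E x_i\ge2$; the equality $\dim V_2^1=\#N_2^1=1$ then requires some $x_k$ with $\ord_E x_k=2$, and after permutation $k=3$.

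Next, (\ref{itm:basis:2})--(\ref{itm:basis:4}) proceed by the same pattern: in degrees $i<\frac{r-1}{2}$ the only available ring generators are $x_3,x_4$ (since $\ord_E x_1,\ord_E x_2\ge\frac{r-1}{2}$ is not yet known but $\ord_E x_1,x_2>2$ and the count $\#N_i$ involves only $l_3,l_4$), so the monomials $x_3^{l_3}x_4^{l_4}$ with $(0,0,l_3,l_4,0)\in N_i$ are linearly independent by the same argument as in \cite{K05} and span by Lemma \ref{lem:dim}; the leftover generator appears exactly when $i=\frac{r-1}{2}$, forcing $\ord_E$ of one of $x_1,x_2$ (after the correction $\bar x_k$, which kills all lower-degree pieces lying in $V_i^1$) to be exactly $\frac{r-1}{2}$. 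Relabel it $\bar x_2$. Repeating once more in degrees up to $\frac{r+1}{2}$ produces $\hat x_1$ with $\ord_E\hat x_1\ge\frac{r+1}{2}$, and the dimension count at $i=\frac{r+1}{2}$, where $\#N_{\frac{r+1}{2}}$ acquires the new element $(1,0,0,0,0)$, forces $\ord_E\hat x_1=\frac{r+1}{2}$. Then for $i<r-1$ the four generators $\hat x_1,\bar x_2,x_3,x_4$ together produce exactly $\#N_i$ monomials, which must be a basis because $\dim V_i=\#N_i$ and they are independent (their $\ord_E$-leading terms on $E$ are distinct, using $E^3=1/r$ and the quotient structure at $Q$ to separate them).

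For (\ref{itm:basis:5}) and (\ref{itm:basis:6}): at $i=r-1$ one has $\#N_{r-1}$ monomials but $\dim V_{r-1}=\#N_{r-1}-1$ (here I would isolate the exact discrepancy from $\#N^0_{r-1}$, where the two monomials $\hat x_1^2\bar x_2^0\cdots$-type and $\bar x_2^2\cdots$-type of total weight $r-1$ collide), so there is exactly one relation $\psi$; it must live in $V_{r-1}^0$ because $N_{r-1}^1$ still has the expected dimension. Since $\psi$ is a relation in the graded ring of $\cO_X$, it lifts to an honest element of $\mathfrak{m}_P$ with $\ord_E\psi\ge r$, and the exactness of the displayed sequence is just this count. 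Finally $\ord_E\psi=r$ because $\psi\in f_*\cO_Y(-(r-1)E)$ but, were $\ord_E\psi\ge r+1$, multiplying the basis of $V_{r-1}$ by $x_3,x_4$ and adding $x_4\psi$ (which would then lie in $V_r$) would overshoot $\dim V_r=\#N_r-1$; the $-1$ here comes precisely from the relation $x_4\psi$, giving the sequence in (\ref{itm:basis:6}). The new ring generator in degree $r$ is $\psi$ itself, corresponding to the coordinate $x_5$ with $\ord_E x_5=r$, which is why the embedding dimension jumps to five.

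The main obstacle I expect is (\ref{itm:basis:5})--(\ref{itm:basis:6}): establishing that there is \emph{exactly one} relation, that it sits in the even part $V^0_{r-1}$, and that $\ord_E\psi$ is exactly $r$ rather than larger. This requires the numerical identity $\dim V_{r-1}=\#N_{r-1}-1$ to be verified honestly from the Bernoulli-polynomial formula in Lemma \ref{lem:dim} (the congruence $r\equiv\pm1\bmod 8$ enters here), together with a nondegeneracy argument — analogous to the one in \cite[Section 7]{K05} — that the relation genuinely involves $\hat x_1^2$ and $\bar x_2^2$ with nonzero coefficients, so that $\psi$ can be taken to be a genuine new coordinate $x_5$ and not absorbed into the existing ones. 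The rest is the routine induction already carried out in \cite{K05}, adapted by carrying the $\frac12(1,1,1,0)$-weights through every step.
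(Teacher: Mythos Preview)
Your overall strategy—induct on $i$, use Lemma~\ref{lem:dim} for dimension counts, and follow \cite[Lemma 7.2]{K05}—is exactly what the paper does. Parts (\ref{itm:basis:1})--(\ref{itm:basis:4}) are fine; the paper singles out the seed step that $x_3^2,x_4^4$ form a basis of $V_4^0$ (from $\dim V_4^0=2$), which anchors the independence argument you allude to.

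The genuine problem is your account of (\ref{itm:basis:5})--(\ref{itm:basis:6}). Lemma~\ref{lem:dim} gives $\dim V_i^j=\#N_i^j$ for \emph{all} $i\ge0$, so $\dim V_{r-1}=\#N_{r-1}$ and $\dim V_r=\#N_r$, not $\#N_{r-1}-1$ and $\#N_r-1$ as you write. With only the $\#N_{r-1}$ monomials indexed by $N_{r-1}$ (where $l_1,l_2\le1$) you get a basis of $V_{r-1}$ and no relation at all. You have also miscounted weights: $\wt\hat x_1^2=r+1$, not $r-1$, so there is no ``collision of $\hat x_1^2$ and $\bar x_2^2$'' in degree $r-1$.

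The correct mechanism is that $\bar x_2^2$, which has weight $r-1$ but $l_2=2$ and hence lies \emph{outside} the index set $N_{r-1}$, is the first extra monomial to appear. The $\#N_{r-1}^0$ monomials from $N_{r-1}^0$ together with $\bar x_2^2$ give $\#N_{r-1}^0+1$ elements in $V_{r-1}^0$ of dimension $\#N_{r-1}^0$, forcing exactly one relation $\psi=\bar x_2^2+(\text{terms with }l_2\le1)$; this is confirmed later in the paper where $\psi=\bar x_2^2+q(\hat x_1,x_3,x_4)$. The same bookkeeping (now with $\bar x_2^2x_4$ the extra monomial and $\psi$ supplying the $l_5=1$ slot) runs (\ref{itm:basis:6}); the paper then checks exactness at the middle by the dimension count $\dim V_r=\#N_r$. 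Your ``nondegeneracy'' worry about $\psi$ involving $\hat x_1^2$ is therefore misplaced: $\hat x_1^2$ cannot occur, and the $\bar x_2^2$ coefficient is nonzero by construction.
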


\begin{proof}
We follow the proof of \cite[Lemma 7.2]{K05}, with using the computation of Lemma \ref{lem:dim}. (\ref{itm:basis:1}) follows from $\dim V_1^0=1$, $\dim V_1^1=0$ and $\dim V_2^1=1$. Then $V_4^0$ is spanned by $x_3^2$ and $x_4^4$, which should form a basis of $V_4^0$ by $\dim V_4^0=2$. Now (\ref{itm:basis:2}) to (\ref{itm:basis:5}) follow from the same argument as in \cite[Lemma 7.2]{K05}. We obtain the sequence in (\ref{itm:basis:6}) also, which is exact possibly except for the middle. Its exactness is verified by comparing dimensions.
\end{proof}

\begin{corollary}\label{cor:basis}
We distribute weights $\wt(\hat{x}_1,\bar{x}_2,x_3,x_4)=(\frac{r+1}{2},\frac{r-1}{2},2,1)$ to the coordinates $\hat{x}_1,\bar{x}_2,x_3,x_4$ obtained in Lemma \textup{\ref{lem:basis}}. Then $\phi$ in \textup{(\ref{eqn:cD/2})} is of form
\begin{align*}
\phi=cx_4\psi+\phi_{>r}(\hat{x}_1,\bar{x}_2,x_3,x_4)
\end{align*}
with $c\in\bC$ and a function $\phi_{>r}$ of weighted order $>r$, where $\psi$ is as in Lemma \textup{\ref{lem:basis}(\ref{itm:basis:5})}.
\end{corollary}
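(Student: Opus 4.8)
The plan is to expand $\phi$ as a convergent power series in the coordinates $\hat x_1,\bar x_2,x_3,x_4$ of Lemma \ref{lem:basis}, to decompose it as $\phi=\sum_i\phi_i$ into its weighted-homogeneous parts for the weights $(\tfrac{r+1}{2},\tfrac{r-1}{2},2,1)$, and then to strip these parts off one weight at a time, in the same spirit as the corresponding step of \cite[Section 7]{K05}. The only external input is that $\phi$ cuts out $X$ and that $f$ is a contraction, so that $\cO_X\hookrightarrow f_*\cO_Y$ and hence the pullback of $\phi$ vanishes identically on $Y$; in particular its order along $E$ is $\infty$.

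First I would record the estimate that drives everything: a weighted-homogeneous polynomial $g$ of weight $i$ in $\hat x_1,\bar x_2,x_3,x_4$ has $\ord_Eg\ge i$, because by Lemma \ref{lem:basis}(\ref{itm:basis:1}),(\ref{itm:basis:3}),(\ref{itm:basis:4}) the order along $E$ of each of the four coordinates equals its weight, and equality holds exactly when the class of $g$ in $V_i$ is non-zero. Let $\phi_{i_0}$ be the non-zero weighted-homogeneous part of $\phi$ of least weight $i_0$, and suppose $i_0<r-1$. Then every weight-$i_0$ monomial automatically has $l_1,l_2\le 1$, so the monomials occurring in $\phi_{i_0}$ are among those indexed by $N_{i_0}$, which are linearly independent in $V_{i_0}$ by Lemma \ref{lem:basis}(\ref{itm:basis:4}); hence $\phi_{i_0}$ has non-zero class in $V_{i_0}$, and since every $\phi_i$ with $i>i_0$ has $\ord_E>i_0$ we would obtain $\ord_E(f^*\phi)=i_0<\infty$, a contradiction. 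Therefore $\phi$ has weighted order at least $r-1$.

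Next I would treat the parts of weight $r-1$ and $r$. By the same order argument the class of $\phi_{r-1}$ in $V_{r-1}$ vanishes, and by Lemma \ref{lem:basis}(\ref{itm:basis:5}) the weight-$(r-1)$ polynomials whose class in $V_{r-1}$ vanishes form the line $\bC\psi$, so $\phi_{r-1}=c\psi$ for some $c\in\bC$; by Lemma \ref{lem:basis}(\ref{itm:basis:6}) one has $\ord_E\psi=r$. Write $\phi=c\psi+\phi_r+\phi_{>r}$ with $\phi_{>r}$ of weighted order $>r$. Since the pullback of $\phi$ vanishes on $Y$, the class $c\bar\psi+\overline{\phi_r}$ in $V_r$ vanishes, where $\bar\psi$ and $\overline{\phi_r}$ denote the respective classes. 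In the exact sequence of Lemma \ref{lem:basis}(\ref{itm:basis:6}), $\overline{\phi_r}$ lies in the span of the classes of the weight-$r$ monomials in $\hat x_1,\bar x_2,x_3,x_4$ — the generators with $l_5=0$ — while $\bar\psi$ is the class of the generator with $l_5=1$, and the unique relation $\bC x_4\psi$ of that sequence involves only the $l_5=0$ generators; hence $\bar\psi$ is linearly independent of the span of the weight-$r$ monomial classes, and so $c=0$. Then $\phi_{r-1}=0$ and $\overline{\phi_r}=0$, so by Lemma \ref{lem:basis}(\ref{itm:basis:6}) again $\phi_r$ lies in the one-dimensional relation space $\bC x_4\psi$; writing the resulting constant as $c$ and collecting the terms of weighted order $>r$ into $\phi_{>r}(\hat x_1,\bar x_2,x_3,x_4)$ yields the asserted form.

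The crux is the last comparison, inside $V_r$. One must identify precisely which classes the generators in Lemma \ref{lem:basis}(\ref{itm:basis:6}) contribute, and in particular see that $\psi$ — a polynomial of weighted order $r-1$ but of order $r$ along $E$ — genuinely enlarges $V_r$ beyond the span of the weight-$r$ monomials in $\hat x_1,\bar x_2,x_3,x_4$; this is where Lemma \ref{lem:dim} and the exactness in Lemma \ref{lem:basis}(\ref{itm:basis:6}) are essential. Otherwise a stray summand $c\psi$ with $c\ne 0$ could not be absorbed by $\phi_r$ and would force $\ord_E(f^*\phi)=r<\infty$, contradicting the vanishing of the pullback of $\phi$. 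Everything else is the weighted-order bookkeeping carried out in \cite[Section 7]{K05}.
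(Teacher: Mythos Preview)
Your proof is correct and follows essentially the same approach as the paper's: both decompose $\phi$ by weighted order and use the structure of the $V_i$ from Lemma~\ref{lem:basis} to force the low-weight part into $\bC x_4\psi$. The only organisational difference is that the paper lumps everything together as $\phi=\phi_{\le r}+\phi_{>r}$ and observes in one stroke that $\phi_{\le r}$ lies in the kernel of $\bigoplus_{(l_1,l_2,l_3,l_4,0)\in\bigcup_{i\le r}N_i^0}\bC\hat{x}_1^{l_1}\bar{x}_2^{l_2}x_3^{l_3}x_4^{l_4}\to\cO_X/f_*\cO_Y(-(r+1)E)$, which is $\bC x_4\psi$ by Lemma~\ref{lem:basis}(\ref{itm:basis:4})--(\ref{itm:basis:6}); you instead peel off weights $<r-1$, $r-1$, $r$ one at a time, and your extra step showing the provisional coefficient $c$ of $\psi$ vanishes (via independence of $\bar\psi$ from the $l_5=0$ classes in $V_r$) is exactly what the paper's kernel statement encodes implicitly.
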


\begin{proof}
Decompose $\phi=\phi_{\le r}+\phi_{>r}$ into the part $\phi_{\le r}$ of weighted order $\le r$ and $\phi_{>r}$ of weighted order $>r$. Then $\ord_E\phi_{\le r}=\ord_E\phi_{>r}>r$, so $\phi_{\le r}$ is mapped to zero by the natural homomorphism
\begin{align*}
\bigoplus_{(l_1,l_2,l_3,l_4,0)\in \bigcup_{i\le r}N_i^0}\bC\hat{x}_1^{l_1}\bar{x}_2^{l_2}x_3^{l_3}x_4^{l_4}\to\cO_X/f_*\cO_Y(-(r+1)E),
\end{align*}
whose kernel is $\bC x_4\psi$ by Lemma \ref{lem:basis}(\ref{itm:basis:4})-(\ref{itm:basis:6}).
\end{proof}

We shall derive an expression of the germ $P\in X$ in Theorem \ref{thm:cD/2}. By \cite[Remark 23.1]{M85}, the c$D/2$ point $P\in X$ has an identification in (\ref{eqn:cD/2}) with $\phi$ either of
\begin{align}
\tag{A}\label{eqn:A}
\phi&=x_1^2+x_2x_3x_4+x_2^{2\alpha}+x_3^{2\beta}+x_4^\gamma,\\
\tag{B}\label{eqn:B}
\phi&=x_1^2+x_2^2x_4+\lambda x_2x_3^{2\alpha-1}+g(x_3^2,x_4),
\end{align}
with $\alpha,\beta\ge2$, $\gamma\ge3$, $\lambda\in\bC$ and $g\in(x_3^4,x_3^2x_4^2,x_4^3)$. As its general elephant has type $D_k$ with $k\ge2r$ by \cite[Lemma 5.2(i)]{K05}, we have
\begin{align}\label{eqn:elephant}
\gamma\ge r\ \ \textrm{in (\ref{eqn:A})},\qquad \ord g(0,x_4)\ge r\ \ \textrm{in (\ref{eqn:B})}.
\end{align}

\begin{lemma}\label{lem:A}
The case \textup{(\ref{eqn:A})} does not happen.
\end{lemma}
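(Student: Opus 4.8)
The plan is to rule out case (\ref{eqn:A}) by confronting the weighted-order structure forced by Corollary \ref{cor:basis} with the explicit shape of $\phi$ in (\ref{eqn:A}). First I would observe that by Lemma \ref{lem:basis}(\ref{itm:basis:1}) the coordinate $x_4$ has $\ord_E x_4=1$ and $x_3$ has $\ord_E x_3=2$, so these two coordinates already carry the weights $1$ and $2$ prescribed in the corollary; the substitutions passing from $x_1,x_2$ to $\bar x_1,\bar x_2$ and then to $\hat x_1,\bar x_2$ only add to $x_1,x_2$ polynomials in $x_3,x_4$ (and, in the last step, lower powers of $\bar x_2$), all of strictly positive weight. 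Hence in the new coordinates the monomial $x_1^2$ becomes $\hat x_1^2$ plus terms of weighted order $\ge r+1$, the monomial $x_2 x_3 x_4$ becomes $\bar x_2 x_3 x_4$ plus higher-weight terms, and the term $x_2^{2\alpha}$, $x_3^{2\beta}$, $x_4^\gamma$ transform similarly with leading weighted orders $(r-1)\alpha$, $4\beta$, $\gamma$ respectively.

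Next I would compare this with Corollary \ref{cor:basis}, which says $\phi = c x_4\psi + \phi_{>r}$, where $\ord_E\psi = r$ (Lemma \ref{lem:basis}(\ref{itm:basis:6})) and one checks from the grading that $\psi$ has weighted order $r-1$, so $c x_4\psi$ has weighted order $r$, and everything else in $\phi$ has weighted order $>r$. The point $\hat x_1^2$ has weighted order $r+1$, which is consistent; the dangerous low-weight term is $\bar x_2 x_3 x_4$, of weighted order $\frac{r-1}{2}+2+1 = \frac{r+5}{2}$, which is $\le r$ precisely when $r\ge 5$, hence always here since $r\ge 7$. So for $\phi$ to have the form demanded by the corollary, the coefficient of this monomial must vanish after the coordinate change — but the coordinate change only modifies $x_1,x_2$ by things of positive weight, so it cannot kill the $\bar x_2 x_3 x_4$ term, whose coefficient remains nonzero. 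This forces $c x_4\psi$ to contain $\bar x_2 x_3 x_4$, hence $\psi$ to contain $\bar x_2 x_3$; but then one examines what other monomials of weighted order $r-1$ are available for $\psi$ among the basis in Lemma \ref{lem:basis}(\ref{itm:basis:5}), namely the $\hat x_1^{l_1}\bar x_2^{l_2}x_3^{l_3}x_4^{l_4}$ with $(l_1,l_2,l_3,l_4,0)\in N_{r-1}$, and derives a contradiction with (\ref{eqn:A}): the relation $\psi$ is the unique relation among those monomials in $V_{r-1}^0$, and one checks using $\ord_E x_i$ that the relation forced by $\phi\equiv 0$ in case (\ref{eqn:A}) would have to be $\hat x_1^2$ equated to something of weighted order $r+1$, which cannot be matched to weighted order $r-1$.

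More precisely, the cleanest route is: in case (\ref{eqn:A}) the term of minimal weighted order in $\phi$, other than $\hat x_1^2$, is $\bar x_2 x_3 x_4$ of weighted order $\frac{r+5}{2}< r$; since $\frac{r+5}{2}$ is not congruent to $r$ and there is no cancellation available, $\phi$ genuinely has a nonzero component of weighted order $\frac{r+5}{2}$, contradicting Corollary \ref{cor:basis} which allows a nonzero component only in weighted order $r$ (from $c x_4\psi$) and in weighted orders $>r$. Therefore (\ref{eqn:A}) cannot occur.

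The main obstacle will be the bookkeeping in the first step: one must verify carefully that the successive coordinate changes $x_1\mapsto \bar x_1\mapsto \hat x_1$ and $x_2\mapsto\bar x_2$ really do only add monomials of weighted order exceeding that of $x_1$ (resp. $x_2$), so that the leading terms and, crucially, the $\bar x_2 x_3 x_4$ term of $\phi$ survive with their coefficients — and that no conspiracy among the many higher-order corrections in $\phi_{>r}(\hat x_1,\bar x_2,x_3,x_4)$ can reintroduce a weighted-order-$\frac{r+5}{2}$ term that cancels it (it cannot, since $\phi_{>r}$ by definition has weighted order $>r$, and $c x_4\psi$ has weighted order exactly $r\neq\frac{r+5}{2}$). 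Once this is pinned down the contradiction with Corollary \ref{cor:basis} is immediate.
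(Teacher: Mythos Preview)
Your endgame is the paper's own: exhibit a monomial of weighted order strictly less than $r$ surviving in $\phi$ and invoke Corollary~\ref{cor:basis}. But there is a genuine gap in how you get there. Lemma~\ref{lem:basis} is proved for an \emph{arbitrary} identification (\ref{eqn:cD/2}); its parts (\ref{itm:basis:1}) and (\ref{itm:basis:3}) each perform a permutation---the coordinate with $\ord_E=2$ is \emph{renamed} $x_3$, and the $\bar x_k$ with $\ord_E=\frac{r-1}{2}$ is \emph{renamed} $\bar x_2$. In (\ref{eqn:A}) the symbols $x_1,x_2,x_3$ already have fixed meanings, so you are not entitled to assume that these two labellings agree. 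Two things are missing.

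First, you must rule out $\ord_Ex_1=2$. This is done via the relation $-x_1^2=x_2x_3x_4+x_2^{2\alpha}+x_3^{2\beta}+x_4^\gamma$ together with (\ref{eqn:elephant}), which forces $\ord_Ex_1\ge3$; only then may one set $\ord_Ex_3=2$ using the $x_2\leftrightarrow x_3$ symmetry of (\ref{eqn:A}). Second, and more substantially, after forming $\bar x_1,\bar x_2$ you must treat \emph{both} possibilities for which one lands outside the span $W_{\frac{r-1}{2}}\subset V_{\frac{r-1}{2}}$ of the $x_3,x_4$-monomials. If $\bar x_1\notin W_{\frac{r-1}{2}}$, then it is $\bar x_1$ that receives weight $\frac{r-1}{2}$ and the quartuple is $(\hat x_2,\bar x_1,x_3,x_4)$; the offending monomial is then $\bar x_1^2$ of weight $r-1$, not $\bar x_2x_3x_4$. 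Your argument covers only the complementary case $\bar x_1\in W_{\frac{r-1}{2}}$. That case also buys you $\hat x_1=x_1+p_1(x_3,x_4)$ with $p_1$ free of $\bar x_2$, which is what actually guarantees that the coefficient of $\bar x_2x_3x_4$ in $\phi$ is unaffected by $(\hat x_1-p_1)^2$; your stated reason, that $x_1^2$ becomes $\hat x_1^2$ plus terms of weighted order $\ge r+1$, is false as written (e.g.\ $p_1^2$ has low weight), though harmless once $p_1$ is known not to involve $\bar x_2$.
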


\begin{proof}
By Lemma \ref{lem:basis}(\ref{itm:basis:1}), $\ord_Ex_4=1$, $\ord_Ex_i\ge2$ for $i=1,2,3$ and some $\ord_Ex_i=2$. $\ord_Ex_1\ge3$ by the relation $-x_1^2=x_2x_3x_4+x_2^{2\alpha}+x_3^{2\beta}+x_4^\gamma$ and (\ref{eqn:elephant}). Thus we may set $\ord_Ex_3=2$ by permutation, and construct $\bar{x}_1,\bar{x}_2$ as in Lemma \ref{lem:basis}(\ref{itm:basis:2}).

Let $W_{\frac{r-1}{2}}$ be the subspace of $V_{\frac{r-1}{2}}$ spanned by the monomials in $x_3,x_4$. If $\bar{x}_1\not\in W_{\frac{r-1}{2}}$, the triple $(\bar{x}_1,x_3,x_4)$ plays the role of $(\bar{x}_2,x_3,x_4)$ in Lemma \ref{lem:basis}(\ref{itm:basis:3}). We construct $\hat{x}_2$ as in Lemma \ref{lem:basis}(\ref{itm:basis:3}) to obtain a quartuple $(\hat{x}_2,\bar{x}_1,x_3,x_4)$, and distribute $\wt(\hat{x}_2,\bar{x}_1,x_3,x_4)=(\frac{r+1}{2},\frac{r-1}{2},2,1)$ as in Corollary \ref{cor:basis}. Set $\bar{x}_1=x_1+p_1(x_3,x_4)$, $\hat{x}_2=x_2+p_2(\bar{x}_1,x_3,x_4)$ and rewrite $\phi$ as
\begin{align*}
\phi=(\bar{x}_1-p_1)^2+(\hat{x}_2-p_2)x_3x_4+(\hat{x}_2-p_2)^{2\alpha}+x_3^{2\beta}+x_4^\gamma.
\end{align*}
$\phi$ has the term $\bar{x}_1^2$ of weight $r-1$, which contradicts Corollary \ref{cor:basis}.

Hence $\bar{x}_1\in W_{\frac{r-1}{2}}$, and we obtain a quartuple $(\hat{x}_1,\bar{x}_2,x_3,x_4)$ by $\hat{x}_1=x_1+p_1(x_3,x_4)$, $\bar{x}_2=x_2+p_2(x_3,x_4)$ as in Lemma \ref{lem:basis}. Distribute $\wt(\hat{x}_1,\bar{x}_2,x_3,x_4)=(\frac{r+1}{2},\frac{r-1}{2},2,1)$ and rewrite $\phi$ as
\begin{align*}
\phi=(\hat{x}_1-p_1)^2+(\bar{x}_2-p_2)x_3x_4+(\bar{x}_2-p_2)^{2\alpha}+x_3^{2\beta}+x_4^\gamma.
\end{align*}
$\phi$ has the term $\bar{x}_2x_3x_4$ of weight $\frac{r+5}{2}$, whence $\frac{r+5}{2}\ge r$ by Corollary \ref{cor:basis}, a contradiction to $r\ge7$.
\end{proof}

\begin{lemma}\label{lem:B}
The germ $P\in X$ has an expression in Theorem \textup{\ref{thm:cD/2}}, with $q$ not of form $(x_3s(x_3^2,x_4))^2$, such that each $\ord_Ex_i$ coincides with $\wt x_i$ distributed in Theorem \textup{\ref{thm:cD/2}}.
\end{lemma}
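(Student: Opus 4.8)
The plan is to run, for case (\ref{eqn:B}), the same machine as in the proof of Lemma \ref{lem:A} and in \cite[Section 7]{K05}, and then to synthesise the fifth coordinate $x_5$ out of the degree-$r$ part of the graded ring $\bigoplus_i V_i$.

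First I would pin down the orders $\ord_E x_i$. By Lemma \ref{lem:basis}(\ref{itm:basis:1}), $\ord_E x_4=1$, $\ord_E x_i\ge2$ for $i=1,2,3$, and $\ord_E x_k=2$ for some $k$; combining the relation $x_1^2=-(x_2^2x_4+\lambda x_2x_3^{2\alpha-1}+g)$ with the elephant bound (\ref{eqn:elephant}) and comparing parities of the orders of the two sides forces $\ord_E x_1\ge3$ and $\ord_E x_2\ge3$, hence $\ord_E x_3=2$ with no permutation needed. I would then build $\bar x_1,\bar x_2$ as in Lemma \ref{lem:basis}(\ref{itm:basis:2}) and invoke Lemma \ref{lem:basis}(\ref{itm:basis:3}),(\ref{itm:basis:4}) together with Corollary \ref{cor:basis}. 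Exactly as in Lemma \ref{lem:A}, if the coordinate of weight $\frac{r-1}{2}$ supplied by Lemma \ref{lem:basis}(\ref{itm:basis:3}) were built out of $x_1$, then after rewriting $\phi$ the square of that coordinate would occur, a monomial of weighted order $r-1\le r$ not divisible by $x_4$ and hence impossible by $\phi=cx_4\psi+\phi_{>r}$; so $\bar x_2$ is built out of $x_2$, $\hat x_1$ out of $x_1$, and $\ord_E x_i=\wt x_i$ for $i=1,\dots,4$. The same mechanism applied to the part of $\phi$ of degree one in $\hat x_1$ forces the correction to be $x_1=\hat x_1+\sigma$ with $\sigma$ a weighted-homogeneous polynomial of weight $\frac{r-1}{2}$ in $x_3,x_4$ only, and here $r\equiv\pm1\bmod 8$ enters to guarantee $x_4\mid\sigma$, since the would-be monomial $x_3^{(r-1)/4}$ has the wrong $x_3$-parity to occur in $\sigma$.

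Next I would introduce $x_5$. Apart from $\hat x_1,\bar x_2,x_3,x_4$ the ring $\bigoplus_i V_i$ has one more generator, in degree $r$, represented by $\psi$ (of order $r$ along $E$ by Lemma \ref{lem:basis}(\ref{itm:basis:6})). Using the relation of Lemma \ref{lem:basis}(\ref{itm:basis:5}) I would choose $q(\hat x_1,x_3,x_4)$, weighted homogeneous of weight $r-1$ and with $\bar x_2^2+q$ of $E$-order $\ge r$, and set $x_5:=-\bar x_2^2-q$, so that $\ord_E x_5=r=\wt x_5$; the parity of the quotient $\frac{1}{2}(1,1,1,0,0)$ makes $q=x_1x_3\tilde q_1(x_3^2,x_4)+q_0(x_3^2,x_4)$ with the $x_1$-part $\tilde q_1$ determined by $\sigma$. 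Substituting $\bar x_2^2=-x_5-q$ into $\phi=cx_4\psi+\phi_{>r}$ and using $x_4\mid\sigma$ to absorb the $\hat x_1$-linear part of $\phi$ into $x_4q$, I would obtain the companion equation $\hat x_1^2+x_4x_5+p(\bar x_2,x_3,x_4)=0$, in which $p$ is made up of $\lambda\bar x_2x_3^{2\alpha-1}$ and the part of $\phi$ of weighted order $>r$. That $p$ has weighted order $>r$ follows because the degree-one-in-$\bar x_2$ part of $\phi$ already has weighted order $\ge r$ by Corollary \ref{cor:basis}, while $r\equiv\pm1\bmod 8$ rules out the equality $\frac{r-1}{2}+2(2\alpha-1)=r$; after the harmless substitution $x_2\mapsto\sqrt{-1}\,x_2$ in (\ref{eqn:B}) the two equations are those of Theorem \ref{thm:cD/2}.

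Finally, for the last clause: were $q=(x_3s(x_3^2,x_4))^2$, the second equation would read $x_5=-(\bar x_2+\sqrt{-1}x_3s)(\bar x_2-\sqrt{-1}x_3s)$, and since $x_3s$ has weight $\frac{r-1}{2}$, matching $\ord_E x_5=r$ would force one factor to have $E$-order $\ge\frac{r+1}{2}$; then $\bar x_2$ would lie in $V_{\frac{r-1}{2}}$ in the span of the monomials in $x_3,x_4$, against Lemma \ref{lem:basis}(\ref{itm:basis:3}). The step I expect to fight hardest is the third one: reconciling the normal form (\ref{eqn:B}) with the target complete intersection while tracking all the weighted orders and keeping straight which coefficients Corollary \ref{cor:basis} forces to vanish or to be divisible by $x_4$.
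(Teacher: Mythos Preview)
Your proposal is correct and follows essentially the same route as the paper: pin down the orders, rule out $\bar x_1$ as the weight-$\frac{r-1}{2}$ coordinate via Corollary~\ref{cor:basis}, identify $\psi=\bar x_2^2+q$ and set $x_5:=-\psi$, then handle the last clause through Lemma~\ref{lem:basis}(\ref{itm:basis:3}). The paper is terser---it deduces $p_2=0$ and the shape $\phi=\hat x_1^2+x_4\psi+p(\bar x_2,x_3,x_4)$ directly from $\phi_{\le r}\in\bC x_4\psi$, bypassing your separate mod-$8$ checks for $x_4\mid\sigma$ and for the weight of $\lambda\bar x_2x_3^{2\alpha-1}$, which are valid but already forced by Corollary~\ref{cor:basis} since the offending monomials are not divisible by $x_4$.
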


\begin{proof}
We have the case (\ref{eqn:B}) by Lemma \ref{lem:A}. $\ord_Ex_4=1$ and $\ord_Ex_1\ge3$ as in (\ref{eqn:A}), then $\ord_Ex_2\ge3$ and $\ord_Ex_3=2$. We construct $\bar{x}_1,\bar{x}_2$ as in Lemma \ref{lem:basis}(\ref{itm:basis:2}). By the same reason as in the proof of Lemma \ref{lem:A}, we obtain $\bar{x}_1\in W_{\frac{r-1}{2}}$ and a quartuple $(\hat{x}_1,\bar{x}_2,x_3,x_4)$ by $\hat{x}_1=x_1+p_1(x_3,x_4)$, $\bar{x}_2=x_2+p_2(x_3,x_4)$. Distribute $\wt(\hat{x}_1,\bar{x}_2,x_3,x_4)=(\frac{r+1}{2},\frac{r-1}{2},2,1)$ and rewrite $\phi$ as
\begin{align*}
\phi=(\hat{x}_1-p_1)^2+(\bar{x}_2-p_2)^2x_4+\lambda(\bar{x}_2-p_2)x_3^{2\alpha-1}+g(x_3^2,x_4).
\end{align*}
$\phi$ has the term $\bar{x}_2^2x_4$ of weight $r$ and should be of form
\begin{align*}
\phi=(\bar{x}_2^2+h(\hat{x}_1,\bar{x}_2,x_3,x_4))x_4+\phi_{>r}(\hat{x}_1,\bar{x}_2,x_3,x_4)
\end{align*}
as in Corollary \ref{cor:basis} with $\psi=\bar{x}_2^2+h(\hat{x}_1,\bar{x}_2,x_3,x_4)$. In particular $p_2=0$ as otherwise $p_2\bar{x}_2x_4$ would be of weighted order $<r$, and one can write
\begin{align*}
\phi&=\hat{x}_1^2+x_4\psi+p(\bar{x}_2,x_3,x_4),\qquad \psi=\bar{x}_2^2+q(\hat{x}_1,x_3,x_4),
\end{align*}
where $p$ is of weighted order $>r$ and $q$ is weighted homogeneous of weight $r-1$. A desired expression is derived by setting $x_5:=-\psi$ and replacing $x_4$ with $-x_4$. $q$ is not of form $(x_3s(x_3^2,x_4))^2$ by Lemma \ref{lem:basis}(\ref{itm:basis:3}) and $\ord_E(\bar{x}_2^2+q)=r$.
\end{proof}

Take an expression of the germ $P\in X$ in Theorem \ref{thm:cD/2} by Lemma \ref{lem:B}. We shall apply the extension of \cite[Lemma 6.1]{K05} to the case when $X$ is embedded into a cyclic quotient of $\bC^5$. Let $g\colon Z\to X$ be the weighted blow-up with $\wt x_i=\ord_Ex_i$. By direct calculation, we verify the assumptions of \cite[Lemma 6.1]{K05} and that $Z$ is smooth outside the strict transform of $x_1x_2x_3x_4x_5=0$. Therefore $f$ should coincide with $g$ by \cite[Lemma 6.1]{K05}, and Theorem \ref{thm:cD/2} is completed. 

\begin{remark}
Using $H\cap E\simeq\bP^1$ in the proof of \cite[Theorem 5.4]{K05}, one can show that
\begin{enumerate}
\item
if $r\equiv1$ mod $8$, $x_2x_3^{(r+3)/4}$ appears in $p$ and $x_3^{(r-1)/2}$ appears in $q$,
\item
if $r\equiv7$ mod $8$, $x_3^{(r+1)/2}$ appears in $p$ and $x_1x_3^{(r-3)/4}$ appears in $q$.
\end{enumerate}
\end{remark}

Theorem \ref{thm:nonGor} follows from \cite{H99}, \cite{H00}, \cite{H05}, \cite{K05}, \cite{Km96} and Theorem \ref{thm:cD/2}.

{\small
\begin{acknowledgements}
I was motivated to write this supplement by a question of Professor J. A. Chen. He, with Professor T. Hayakawa, informed me that only one case was left. Partial support was provided by Grant-in-Aid for Young Scientists (A) 20684002.
\end{acknowledgements}
}

\bibliographystyle{amsplain}

\end{document}